\newtheorem{thm}{Theorem}
\newtheorem{lem}{Lemma}
\theoremstyle{definition}
\newtheorem{ex}{Example}
\begin{document}

\title{Heat equation with a general stochastic measure on nested fractals}

\author{Vadym Radchenko\\
Kyiv National Taras Shevchenko University\\
E-mail: vradchenko@univ.kiev.ua
\and 
Martina Z\"{a}hle\\
Friedrich-Schiller-Universit\"{a}t Jena,\\
E-mail: martina.zaehle@uni-jena.de}

\date{}

\maketitle


\renewcommand{\thefootnote}{}

\footnote{2010 \emph{Mathematics Subject Classification}: Primary 60G57; Secondary 60H15}

\footnote{\emph{Key words and phrases}: Stochastic measure, Nested fractal,  Stochastic equation on  fractals, Stochastic heat equation, Mild
solution}

\renewcommand{\thefootnote}{\arabic{footnote}}
\setcounter{footnote}{0}


\begin{abstract}
A stochastic heat equation on an unbounded nested fractal driven by a general stochastic measure is investigated. Existence,
uniqueness and continuity of the mild solution are proved provided that the spectral dimension of the fractal is less
than~4/3.
\end{abstract}

\section{Introduction}
\label{scintr}

The main object of the paper is the stochastic heat equation on an unbounded nested fractal $\tilde{E}$ which can
formally be written as
\begin{equation}
\label{eqhshfx} du(t, x)=\mathcal{L} u(t, x)\,dt +f(t, x, u(t, x))\,dt+ \sigma(t, x)\,d{\mu}(x),\quad u(0, x)=u_0(x)\,
,
\end{equation}
where $u$ is an unknown measurable stochastic function, $(t, x)\in [0,\ T]\times \tilde{E}$, $\mu$ is a general
stochastic measure, $\mathcal{L}$ is the infinitesimal generator of Brownian motion on $\tilde{E}$. We consider the
equation in the mild sense (see (\ref{eqhsif}) below) and obtain existence, uniqueness and continuity of the solution.
The Euclidean version on the real line was proved in~\cite{rads09}.

Here we use the Brownian motion on $\tilde{E}$ constructed in~\cite{kuma93}. Our results are based on the H\"{o}lder
continuity and an upper estimate of the transition density of the process derived in that paper. Diffusions on nested
fractals also were constructed in~\cite{barl98,krebs91,lind90}.

SPDE on fractals and general metric measure spaces are studied e.g. in~\cite{prerok} (with an infinite-dimensional Wiener
process as integrator) and in~\cite{hinzae} (with pathwise definition of the stochastic integral). In our paper we
consider the case of an additive noise, but with a rather general stochastic integrator. We do not assume path
regularity or moment existence for the stochastic measures.

The paper is organized as follows. Section~\ref{scprel} summarizes some basic facts about nested fractals and stochastic
measures. In Section~\ref{scpari} we obtain the continuity of paths of stochastic parameter integrals on these fractals.
Section~\ref{scsheq} contains the main result of the paper.

\section{Preliminaries}
\label{scprel}

\subsection{Heat kernel on nested fractals}
\label{sshksg}

Let $E$ be a {\it nested fractal} (see Definition~1.3~\cite{kuma93}). Then $E\subset\mathbb{R}^d$ is a compact set, $E$ has a
self-similar structure, i.e., $E=\bigcup_{i=1}^{N}\psi_i(E)$, where $\psi_i:\mathbb{R}^d\to\mathbb{R}^d$ are similitudes with
the same contraction factor, 
\[
\bigl|\psi_i(x)-\psi_i(y)\bigr|=\alpha^{-1}|x-y|,\ \alpha>1.
\]
In the following we assume that $\psi_1(x)=\alpha^{-1}x$.

Let us denote  by $d_s$ the {\it spectral dimension}, by $d_w$ the {\it walk dimension}, and by $d_f$ the {\it Hausdorff
dimension} of the fractal set $E$. Recall that $d_f=\log N / \log\alpha$, $d_w=2d_f / d_s$.

Let $F$ be the set of fixed points of $\psi_i$, $1\le i\le N$. $x\in F$ is called an {\it essential fixed point} if there
exist $y\in F,\ j\ne k$ such that $\psi_j(x)=\psi_k(y)$. Let $F^{(0)}$ be the set of essential fixed points,
\[
F^{(n)}=\bigcup_{i_1,\dots,i_n} \psi_{i_1}\circ\dots \circ\psi_{i_n}\bigl(F^{(0)}\bigr).
\]
Further, let us define the unbounded nested fractal~$\tilde{E}$. We set
\[
\tilde{F}^{(0)}=\bigcup_{n\ge 0}\bigl(\alpha^n F^{(n)}\bigr),\quad \tilde{F}^{(n)}=\alpha^{-n} \tilde{F}^{(0)},\quad
\tilde{E}=\textrm{Cl}\,\Bigl(\bigcup_{n\in\mathbb{Z}} \tilde{F}^{(n)} \Bigr)\, .
\]
Note that
\begin{equation}
\label{eqdfte} \tilde{E}=\bigcup_{n\ge 1}\bigcup_{i_1,\dots,i_n} \bigl( \alpha^n
\psi_{i_1}\circ\dots \circ\psi_{i_n} (E)\bigr),
\end{equation}
where any two sets in the union either coincide or have finite intersection.

Let ${\sf m}$ be the $d_f$-{\it dimensional Hausdorff measure} on $\tilde{E}$ such that ${\sf m}(E)=1$, $\mathcal{B}(\tilde{E})$ denote
the Borel $\sigma$-algebra on $\tilde{E}$.

\textbf{Assumption 1.} There exists $k\in\mathbb{N}$ satisfying the following. If for $x,\ y\in E$ we have $|x-y|\le
\alpha^{-m}$, then there exist $x_1$, $x_2$, \dots, $x_l$ ($l\le k$) such that $x_1=x$, $x_l=y$, $x_2,\ \dots,\
x_{l-1}\in F^{(m)}$ and $x_{j}$, $x_{j+1}$, $1\le j\le l-1$ lie in the same set $\psi_{i_1}\circ\dots
\circ\psi_{i_m}\bigl(E\bigr)$.

Let $p(t,x,y)$, $t>0$, $x,\ y\in \tilde{E}$, be the transition density of Brownian motion on $\tilde{E}$ constructed
in~\cite{kuma93} provided that Assumption 1 holds. Then $p$ is symmetric in $x$, $y$, is jointly continuous in
$(t,x,y)$ and satisfies the following {\it H\"older continuity and sub-Gaussian estimate} for some constants $c_1, c_2,
c_3>0$, and $d_J>1$.

\begin{thm} \cite[Theorems 5.1 and 5.2]{kuma93}\label{thbarp}

a) $\bigl|p(t, x, y_1)-p(t, x, y_2)\bigr|\le c_1 t^{-1} \bigl|y_1-y_2\bigr|^{d_w-d_f}$.

b) $p(t, x, y)\le c_2 t^{-d_s/2}\exp\bigl\{-c_3\bigl(|x-y|^{d_w} / t \bigr)^{1/(d_J-1)}\bigr\} \, .$

\end{thm}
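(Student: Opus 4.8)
This statement is quoted from \cite[Theorems 5.1 and 5.2]{kuma93}; the plan below only sketches how I would reconstruct it, building everything on the Dirichlet form of the process. First I would recall the construction of Brownian motion on $\tilde E$ by the decimation method: compatible symmetric random walks are placed on the finite graphs $\tilde F^{(n)}$ (compatibility under taking traces forces the edge conductances to be the fixed point of the renormalisation map), and one passes to the limit. This yields a regular local Dirichlet form $(\mathcal E,\mathcal F)$ on $L^2(\tilde E,\mm)$ with generator $\mathcal L$, exactly self-similar in the sense $\mathcal E(f,f)=\rho_*\sum_{i=1}^N\mathcal E(f\circ\psi_i,f\circ\psi_i)$ with $\rho_*=\alpha^{d_w-d_f}>1$. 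From $\mm(\psi_i A)=N^{-1}\mm(A)$ and $d_f$-Ahlfors regularity of $\tilde E$ one gets $\mm(B(x,r))\asymp r^{d_f}$, and from the resistance scaling a cell-wise Poincar\'e inequality $\int_C|f-\bar f_C|^2\,d\mm\le c\,r^{d_w}\,\mathcal E_C(f,f)$ for each cell $C$ of diameter $r$, together with a cutoff (Caccioppoli) energy inequality. Granted these structural facts, the on-diagonal bound follows by a routine chain: volume regularity and the cell Poincar\'e inequality give a Faber--Krahn/Nash inequality $\|f\|_2^{2+4/d_s}\le c\,\mathcal E(f,f)\,\|f\|_1^{4/d_s}$ (the exponent being fixed by $d_f/d_w=d_s/2$), equivalently $\|P_t\|_{1\to\infty}\le c\,t^{-d_s/2}$, i.e. $p(t,x,x)\le c\,t^{-d_s/2}$, whence $p(t,x,y)\le c\,t^{-d_s/2}$ by the semigroup property and Cauchy--Schwarz.

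For the sub-Gaussian factor in b) I would first prove a crossing-time estimate $P_x\bigl(T_{B(x,r)}\le t\bigr)\le c\exp\{-c\,(r^{d_w}/t)^{1/(d_J-1)}\}$ from the probabilistic construction: to exit $B(x,r)$ the process must traverse a chain of order $r\alpha^{n}$ disjoint cells of scale $\alpha^{-n}$, each traversal costing stochastically an exponential-type time of mean $\asymp\alpha^{-nd_w}$; a large-deviation estimate for the number of traversals, optimised over the nesting level $n$, produces the stretched-exponential rate with the structural exponent $d_J$. Part b) then follows by a Davies--Gaffney/chaining argument: for $|x-y|=R$ write $p(t,x,y)=\int p(t/2,x,z)\,p(t/2,z,y)\,d\mm(z)$, split the integral at $B(x,R/2)$, bound the mass that has left $B(x,R/2)$ by time $t/2$ via the crossing estimate and the remainder via the on-diagonal bound, and optimise over a chain of intermediate points and times; this reproduces the factor $\exp\{-c(R^{d_w}/t)^{1/(d_J-1)}\}$.

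For the H\"older estimate a) the plan is a parabolic oscillation lemma of De Giorgi--Nash--Moser type: from volume doubling, the cell Poincar\'e inequality and the cutoff energy inequality, any local solution of $\partial_s v=\mathcal L v$ satisfies $\operatorname{osc}_{B(y,r)}v(s,\cdot)\le c\,(r/\varrho)^{d_w-d_f}\,\sup_{(s-\varrho^{d_w},\,s]\times B(y,\varrho)}|v|$ for $r\le\varrho$, the H\"older exponent being the sharp value $d_w-d_f$ (the same as for harmonic functions, and traceable to the resistance scaling $R(x,y)\asymp|x-y|^{d_w-d_f}$). Applying this with $v(s,\cdot)=p(s,x,\cdot)$ at the natural scale $\varrho\asymp t^{1/d_w}$ and inserting the bound $\sup|p|\le c\,t^{-d_s/2}$ gives $|p(t,x,y_1)-p(t,x,y_2)|\le c\,|y_1-y_2|^{d_w-d_f}\,t^{-(d_w-d_f)/d_w\,-\,d_s/2}$, and since $d_s/2=d_f/d_w$ the exponent of $t$ collapses exactly to $-1$, which is a); joint continuity in $(t,x,y)$ then follows by pairing this with the analogous time-H\"older estimate.

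I expect the two genuinely hard steps to be the last two displayed ingredients. Obtaining the correct exponent $d_J$ in the crossing-time estimate requires a delicate iteration over the nesting scales of $E$, and this is precisely where Assumption~1 enters: it guarantees that two nearby points of a cell are joined by a short chain through $F^{(m)}$, so that traversals really can be organised cell by cell. Equally demanding is the sharp parabolic oscillation lemma, which amounts to a full De Giorgi--Nash--Moser regularity theory adapted to the anomalous space/time scaling $r\mapsto r^{d_w}$ of the fractal form and is what forces the precise exponent $d_w-d_f$ in a). By comparison the on-diagonal estimate, once the Nash inequality is available, and the passage from the crossing-time bound to b) are standard.
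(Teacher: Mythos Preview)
The paper does not prove this statement at all: it is quoted verbatim from \cite[Theorems~5.1 and~5.2]{kuma93} and used as a black box, so there is no ``paper's own proof'' to compare against. You correctly flag this at the outset, and your sketch is a reasonable high-level outline of the Dirichlet-form machinery behind Kumagai's results (Nash inequality for the on-diagonal bound, crossing-time plus chaining for the off-diagonal sub-Gaussian factor, and a parabolic oscillation/De Giorgi--Nash--Moser argument for the H\"older continuity); since the present paper supplies no argument of its own, there is nothing further to add by way of comparison.
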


We will take this $p(t, x, y)$ as the heat kernel for the stochastic heat equation in the mild form~(\ref{eqhsif}). Note
that $p$ fulfills the deterministic heat equation $\partial p / \partial t=\mathcal{L} p$, where $\mathcal{L}$ is the
infinitesimal generator of the Brownian motion on $\tilde{E}$ \cite[Theorem~6.3]{kuma93}.

The same estimates of the heat kernel on the Sierpi\'{n}ski gasket, both the bounded and unbounded cases, are given in
Theorem~2.23~\cite{barl98}, and the inequalities for the two cases are equivalent. In our paper, we consider
unbounded nested fractals, but we expect the same results for the  bounded versions.

\begin{ex} The \emph{Vicsek set} is the main example of this paper. Let $H_0$ be the closed unit square with vertices
$a_1=(0,0)$, $a_2=(0,1)$, $a_3=(1,1)$, $a_4=(1,0)$. We take $a_5=(1/2,1/2)$ and set
\[
\psi_{i}(x)=a_i+(x-a_i)/3,\quad 1\le i\le 5,\quad H_{n+1}=\bigcup_{i=1}^{5}\psi_i(H_{n}),\quad E_{VS}=\bigcap_{n\ge
0}H_n\, .
\]
Then $E_{VS}$ is called a Vicsek set (or Vicsek snowflake).

It is known that $E_{VS}$ is a nested fractal \cite[Proposition 2.1]{krebs91}, and we have for this set $d_f={\log
5} / {\log 3}$, $d_w={\log 15}/{\log 3}$ \cite[Section 2]{barl98},  $d_s={\log 25}/{\log 15}$ (see calculations in~\cite[Section~6]{zhou09} for $n=2$). Note that Assumption 1 holds for this set with $k=4$.
\end{ex}

\subsection{Stochastic measures}
\label{ssstme}

Let $L_0=L_0(\Omega,\ \mathcal{F},\ {\sf P})$ be the set of all real-valued random variables defined on the
probability space $(\Omega,\ \mathcal{F},\ {\sf P})$ (more precisely, the set of equivalence classes).
Convergence in $L_0$ means the convergence in probability. Let ${\sf X}$ be an arbitrary set and
${\mathcal{B}}$ be a $\sigma$-algebra of subsets of ${\sf X}$.

\medskip
\textbf{Definition.} {\em A $\sigma$-additive mapping $\mu:\ {\mathcal{B}}\to L_0$ is called  stochastic
measure (SM).}
\medskip

We do not assume any martingale or moment conditions for $\mu$. Examples of SM are the following. For square integrable
martingale $X(t),\ 0\le t\le T,$ set $\mu(A)=\int_{[0,\ T]} {\bf 1}_A(t)\, dX(t)$, then $\mu$ is a SM on the Borel
$\sigma$-algebra $\mathcal{B}\bigl([0,\ T]\bigr)$. For unconditionally convergent in probability series $\sum_{n\ge
1}\xi_n$ and $\{x_n\}\subset {\sf X}$ set function $\mu(A)=\sum_{n\ge 1}\xi_n {\bf 1}_A(x_n)$ is a SM on~$2^{\sf X}$.

\begin{ex} Let $E_{VS}$ be the Vicsek set. We construct an example of a SM on the Borel $\sigma$-algebra
$\mathcal{B}\bigl(E_{VS}\bigr)$. Let $\mu$ be a SM on $\mathcal{B}((0,\ 1])$ such that $\mu(\{x\})=0$ a.~s. for each
$x\in (0,\ 1]$. For $n\ge 0$ we take a representation
\[
E_{VS}=\bigcup_{1\le k\le 5^n} E^{(n)}_k,\quad E^{(n)}_k=\psi_{i_1}\circ\dots
\circ\psi_{i_n}\bigl(E_{VS}\bigr)=E^{(n+1)}_{5k-4}\cup\dots\cup E^{(n+1)}_{5k}\, .
\]
We set 
\[
\mu_E\bigl(E^{(n)}_k\bigr)=\mu\bigl(\bigl((k-1)5^{-n},\ k5^{-n}\bigr]\bigr),\quad \mu_E(\{x\})=0
\]
for each $x\in E^{(n)}_k\cap E^{(n)}_i$ and define $\mu_E$ on the generated algebra by additivity. Then $\mu_E$ is continuous in
$\emptyset$ and the values of $\mu_E$ are bounded in probability (see \cite[Theorem B.2.1]{kwawoy}). By Theorem 1
of~\cite{radtvp} we can extend the stochastic set function $\mu_E$ to $\mathcal{B}(E_{VS})$. Taking the weighted sum
of stochastic measures defined on the sets \mbox{$\alpha^n \psi_{i_1}\circ\dots \circ\psi_{i_n} \bigl(E_{VS}\bigr)$}
from~(\ref{eqdfte}), we can easily extend this stochastic measure to the unbounded Vicsek set $\tilde{E}_{VS}$.
\end{ex}

In \cite{kwawoy} the integral of measurable deterministic functions $f:{\sf X}\to{\mathbb R}$ with respect to (w.r.t.)
SM is constructed. The dominated convergence theorem holds for this integral \cite[Proposition 7.1.1]{kwawoy}, and all
bounded measurable functions are integrable. We will use the following statement.

\begin{lem} \cite[Lemma 3.1]{rads09}
\label{lmfkmu} Let $g_l:\ {\sf X}\to {\mathbb R},\ l\ge 1$, be measurable functions such that 
$\bar{g}(x)=\sum_{l=1}^{\infty} \bigl|{g_l}(x)\bigr|$ is integrable w.r.t.~$\mu$. Then
$\sum_{l=1}^{\infty}\Bigl(\int_{\sf X} g_l\,d\mu \Bigr)^2<\infty$~a.~s.
\end{lem}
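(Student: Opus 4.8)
Since no moment or martingale assumptions are imposed on $\mu$, the argument has to be purely measure-theoretic. The plan is to reduce the claim to an unconditional convergence statement for the series $\sum_{l\ge1}\int_{\xx}g_l\,d\mu$ in $L_0$, and then to invoke the classical characterization of such series. As a preliminary normalization, $\mu$-integrability of $\bar{g}$ lets us assume $\bar{g}(x)<\infty$ for every $x$; in particular each $g_l$ satisfies $|g_l|\le\bar{g}$, so each $g_l$ is $\mu$-integrable and both $\int_{\xx}g_l\,d\mu$ and the finite sums $\sum_{l\le n}\varepsilon_l\int_{\xx}g_l\,d\mu$ are well defined for every choice of signs $\varepsilon_l\in\{-1,1\}$.

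First I would fix an arbitrary sign sequence $\varepsilon=(\varepsilon_l)_{l\ge1}\in\{-1,1\}^{\mathbb{N}}$ and put $\phi_n=\sum_{l=1}^{n}\varepsilon_l g_l$ and $\phi=\sum_{l=1}^{\infty}\varepsilon_l g_l$; the last series converges pointwise since $\sum_l|g_l(x)|=\bar{g}(x)<\infty$. Because $|\phi_n|\le\bar{g}$ for all $n$, $\phi_n\to\phi$ pointwise, and $\bar{g}$ is $\mu$-integrable, the dominated convergence theorem for the integral with respect to a stochastic measure \cite[Proposition 7.1.1]{kwawoy} yields that $\phi$ is $\mu$-integrable and
\[
\sum_{l=1}^{n}\varepsilon_l\int_{\xx}g_l\,d\mu=\int_{\xx}\phi_n\,d\mu\ \rp\ \int_{\xx}\phi\,d\mu,\qquad n\to\infty .
\]
Since $\varepsilon$ was arbitrary, the series $\sum_{l\ge1}\int_{\xx}g_l\,d\mu$ converges in $L_0$ after every change of signs.

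To finish, I would invoke the classical fact that if $\sum_l\xi_l$ is a series of real random variables such that $\sum_l\varepsilon_l\xi_l$ converges in probability for every $\varepsilon\in\{-1,1\}^{\mathbb{N}}$ (equivalently, $\sum_l\xi_l$ converges unconditionally in $L_0$), then $\sum_l\xi_l^2<\infty$ a.s.; with $\xi_l=\int_{\xx}g_l\,d\mu$ this is precisely the assertion of the lemma. I expect this last implication to be the real content of the proof. The way I would establish it is to introduce a Rademacher sequence $(r_l)$ independent of everything, note that unconditional convergence in $L_0$ forces $\sum_l r_l\xi_l$ to converge almost surely on the product space (an It\^{o}--Nisio / contraction-principle argument), and then apply Kolmogorov's three-series theorem to the Rademacher series $\sum_l r_l\xi_l(\omega)$ for fixed $\omega$, which gives $\sum_l\xi_l(\omega)^2<\infty$ for almost every $\omega$. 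This is a standard result on random series (see \cite{kwawoy}); once the domination by $\bar{g}$ is in place, everything preceding it is routine.
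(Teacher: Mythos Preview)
The paper does not supply its own proof of this lemma: it is quoted verbatim from \cite[Lemma~3.1]{rads09} and used as a black box, so there is no in-paper argument to compare your proposal against.

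Your proposal is nonetheless a correct outline of the standard proof. The two ingredients you isolate are exactly the right ones: (i) the dominated convergence theorem for integrals with respect to a stochastic measure, applied with dominating function $\bar g$, shows that $\sum_{l}\varepsilon_l\int_{\xx}g_l\,d\mu$ converges in $L_0$ for every sign sequence $\varepsilon$; (ii) sign-convergence (equivalently, unconditional convergence) in $L_0$ of a scalar random series $\sum_l\xi_l$ forces $\sum_l\xi_l^2<\infty$ a.s. The latter is indeed a classical fact from the theory of random series, and your Rademacher/It\^o--Nisio/three-series sketch is the usual route; the one place that deserves a sentence of care is passing from ``$\sum_l\varepsilon_l\xi_l$ converges in $P$-probability for each fixed $\varepsilon$'' to ``$\sum_l r_l\xi_l$ converges on the product space,'' which needs either a Fubini/bounded-in-probability argument or a direct citation (e.g.\ the relevant results in \cite{kwawoy}). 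With that reference in place, your argument is complete and matches the proof one finds in \cite{rads09}.
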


\section{Parameter stochastic integrals on nested fractal}
\label{scpari}

Let $\tilde{E}$ be an unbounded nested fractal, $\mu$ be a SM defined on $\mathcal{B}(\tilde{E})$,
 ${\sf Z}$ be a metric space, function $h:{\sf Z}\times \tilde{E}\to \mathbb{R}$ be integrable w.r.t. $\mu$ on $\tilde{E}$ for each first argument
$z\in {\sf Z}$. Then the stochastic function
\[
\eta(z)=\int_{\tilde{E}} h(z, y)\,d\mu(y),\quad z\in {\sf Z}
\]
is determined.

\begin{thm}\label{thhcms}
Suppose that $h$ is bounded  and continuous on ${\sf Z}\times \tilde{E}$, for some $K_h>0$, $\beta(h)>{d_f}/{2}$ we
have
\begin{equation}\label{eqmscf}
\bigl|h(z, y_1)-h(z, y_2)\bigr|\le K_h \bigl|y_1- y_2\bigr|^{\beta(h)}\, ,
\end{equation}
and the SM $\mu$ is such that $\mu(\{x\})=0$ a.~s. for each $x\in \tilde{E}$, and for some $\tau>d_f/2$ the function
$|y|^{\tau}$ is integrable w.r.t. $\mu$ on $\tilde{E}$.\\ Then $\eta(z)$ has a version with continuous paths.
\end{thm}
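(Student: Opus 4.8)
The plan is to establish continuity of $\eta$ via the Kolmogorov-type continuity criterion adapted to stochastic measures. Since we do not assume moments for $\mu$, the classical Kolmogorov theorem is unavailable; instead I would use the approach from~\cite{rads09}, which rests on Lemma~\ref{lmfkmu} together with an estimate on the $L_0$-behaviour of increments. Concretely, one wants to show that for $z_1,z_2$ in a ball of $\mathsf{Z}$,
\[
\eta(z_1)-\eta(z_2)=\int_{\tilde E}\bigl(h(z_1,y)-h(z_2,y)\bigr)\,d\mu(y)
\]
is small, and moreover that the paths are continuous by covering $\mathsf{Z}$ with finitely many small balls (using local compactness / separability of $\mathsf{Z}$, which is implicit) and controlling the oscillation of $\eta$ on each. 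The key technical device is to expand the relevant functions in a Haar-type or dyadic series on $\tilde E$, apply Lemma~\ref{lmfkmu} to deduce that certain random series are a.s.\ square-summable, and then estimate the continuity modulus of $\eta$ in terms of that square-summable sequence.

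The first step I would carry out is to fix a reference point $y_0\in\tilde E$ (say an essential fixed point scaled appropriately) and reduce everything to integration over a sequence of annuli $A_j=\{y:\,2^{j}\le|y-y_0|<2^{j+1}\}$, $j\in\mathbb Z$. Boundedness of $h$ handles the ``near'' part; the integrability of $|y|^\tau$ with $\tau>d_f/2$ controls the tails, because on the far annuli the factor $|y-y_0|^{-\tau}$ is small and one trades it against the measure of the annulus, whose metric/Hausdorff scaling is governed by $d_f$. On each annulus I would use a finite partition of $\tilde E\cap A_j$ into pieces of the self-similar form $\alpha^n\psi_{i_1}\circ\cdots\circ\psi_{i_n}(E)$ from~(\ref{eqdfte}) of small diameter $\delta$; the number of such pieces grows like $\delta^{-d_f}$ up to constants. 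Using the H\"older bound~(\ref{eqmscf}) with exponent $\beta(h)>d_f/2$, the contribution of each piece to the increment is $\lesssim K_h\delta^{\beta(h)}$ times the $\mu$-mass of that piece, and the condition $\mu(\{x\})=0$ a.s.\ ensures these masses can be made uniformly small; summing the squares via Lemma~\ref{lmfkmu} gives an a.s.\ finite bound on $\sum_j\sum_{\text{pieces}} (\,\cdot\,)^2$ precisely because $2\beta(h)-d_f>0$ and $2\tau-d_f>0$ make the geometric series in $\delta$ and in $j$ converge.

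Assembling these estimates, one gets that for a suitable a.s.-finite random variable, the modulus of continuity of $\eta$ restricted to any bounded region of $\mathsf{Z}$ is dominated by a deterministic function of $\delta$ that tends to $0$; a Borel--Cantelli / chaining argument along a countable dense set of parameter values $z$ then yields a continuous version on each bounded region, and patching these gives the continuous version on all of $\mathsf{Z}$. I expect the main obstacle to be the simultaneous control of the spatial tails and the spatial oscillation: one must choose the annular decomposition and the fineness parameter $\delta=\delta(j)$ so that \emph{both} series converge at once, and keep careful track of how the Hausdorff measure $\mathsf m$ scales on the self-similar pieces of the \emph{unbounded} fractal, since the pieces live at every scale $\alpha^n$. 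The interplay of the two hypotheses $\beta(h)>d_f/2$ and $\tau>d_f/2$ is exactly what makes this balancing possible, and getting the bookkeeping of constants right (so that Lemma~\ref{lmfkmu} applies to one single dominating series $\bar g$) is the delicate part; everything else is routine once that series is identified.
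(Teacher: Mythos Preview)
Your proposal has the right raw ingredients --- Lemma~\ref{lmfkmu}, a self-similar dyadic decomposition of $\tilde E$, and the two threshold conditions $\beta(h)>d_f/2$ and $\tau>d_f/2$ --- but the organizing strategy contains a genuine gap. You frame the argument as a Kolmogorov/chaining scheme in the parameter $z$: bound $\eta(z_1)-\eta(z_2)=\int\bigl(h(z_1,y)-h(z_2,y)\bigr)\,d\mu(y)$ and then chain along a dense set in $\mathsf Z$. But the hypothesis~(\ref{eqmscf}) is H\"older only in the \emph{spatial} variable $y$; in $z$ you are given nothing beyond joint continuity of $h$, with no rate whatsoever. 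So there is no quantitative bound on $h(z_1,y)-h(z_2,y)$ to feed into a chaining argument, and no moments on $\mu$ to compensate. The estimate you then actually write, ``$\lesssim K_h\delta^{\beta(h)}$ times the $\mu$-mass of the piece'', is a $y$-oscillation bound, not a $z$-increment bound; it does not control the quantity you set out to control. (Also, $\mu(\{x\})=0$ a.s.\ does not make ``masses uniformly small''; its role is only to make the finite overlaps of the self-similar pieces $\mu$-negligible so that additivity over the covers holds.)

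The paper avoids this problem by never estimating $z$-increments. On each bounded piece $E_j$ in the decomposition~(\ref{eqdfte}) it writes $\eta_j(z)=\int_{E_j}h(z,y)\,d\mu(y)$ as the limit of Riemann-type sums $S^{(nj)}(z)=\sum_k h\bigl(z,y_k^{(nj)}\bigr)\,\mu\bigl(E_k^{(nj)}\bigr)$ over the $n$th-level self-similar partition of $E_j$, and shows that the telescoping series $\sum_{n\ge0}\bigl|S^{((n+1)j)}(z)-S^{(nj)}(z)\bigr|$ converges \emph{uniformly in $z$} almost surely. The uniformity comes from the $y$-H\"older bound plus Cauchy--Schwarz, splitting into a deterministic factor $\bigl(\sum_n N^{n+1}\alpha^{2n(\beta-\beta(h))}\bigr)^{1/2}$ (finite because $2\beta(h)>d_f=\log N/\log\alpha$) and a $z$-independent random factor $\bigl(\sum_n\alpha^{-2n\beta}\sum_k|\mu(E_k^{((n+1)j)})|^2\bigr)^{1/2}$, finite a.s.\ by Lemma~\ref{lmfkmu}. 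Since each $S^{(nj)}(\cdot)$ is a finite sum of terms $h(\cdot,y_k)\mu(E_k)$, continuous in $z$ by mere continuity of $h$, the uniform limit $\eta_j$ has a continuous version. The outer sum $\sum_{j\ge1}\eta_j$ is handled the same way: a second Cauchy--Schwarz with weights $j^{\rho}$, $\rho>1/2$, together with the bound $j\le(|y|/\gamma)^{d_f}$ on $E_j$, reduces everything to Lemma~\ref{lmfkmu} applied to functions dominated by $C|y|^{\rho d_f}$, and this is where the integrability of $|y|^{\tau}$, $\tau>d_f/2$, enters.

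In short: replace ``bound increments in $z$ and chain'' by ``approximate $\eta$ by continuous-in-$z$ simple integrals and prove uniform-in-$z$ convergence''. Your dyadic bookkeeping and the two exponent conditions then do exactly the work you anticipate, but the continuity of $h$ in $z$ is used only qualitatively, at the level of the approximants.
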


\begin{proof} Using~(\ref{eqdfte}), we can take a representation $\tilde{E}=\bigcup_{j\ge 1} E_j$, where each $E_j$ is a
bounded part of the form 
\[
\alpha^n \psi_{i_1}\circ\dots \circ\psi_{i_n} (E),\quad E_j\subset \alpha^n E,\quad
E_j\not\subset \alpha^{n-1} E\ \textrm{for}\ N^{n-1}<j\le N^n. 
\]
Let 

\[
\gamma=\inf\{|y|,\ y\in E\setminus \alpha^{-1} E\},
\]
we have $\gamma>0$ because $(E\setminus \alpha^{-1} E)\subset \bigcup_{i=2}^{N}\psi_i(E)$, and $0\notin \psi_i(E)$, $2\le
i\le N$ by~\cite[Lemma 5.23]{barl98}. Recalling that $d_f=\log N/\log a$ we infer
\begin{equation}\label{eqesmy}
j\le (|y|/\gamma)^{d_f} ,\quad y\in E_j,\quad j\ge 2.
\end{equation}

Consider $\eta_j(z)=\int_{E_j} h(z, y)\,d\mu(y)$, $j\ge 1$. For each $n\ge 0$ we can take the representation
\[
E_j=\bigcup_{1\le k\le N^n} E^{(nj)}_k,\quad E^{(nj)}_k=\psi_{i_1}\circ\dots \circ\psi_{i_n}\bigl(E_j\bigr)\, .
\]
For
\[
S^{(nj)}(z)=\sum_{1\le k\le N^n}h\bigl(z,
y^{(nj)}_k\bigr)\mu\bigl(E^{(nj)}_k\bigr),\quad y^{(nj)}_k\in E^{(nj)}_k,
\]
the dominated convergence theorem \cite[Proposition 7.1.1]{kwawoy} implies
that $ S^{(nj)}(z)\stackrel{{\sf P}}{\to} \eta_j(z)$, $n\to\infty$, for each $z\in{\sf Z}$.

For convenience, assume that $\textrm{diam}\, E=1$, then $\textrm{diam}\, E^{(nj)}_k=\alpha^{-n}$. If
$0<\beta<\beta(h)-{d_f}/{2}$, using~(\ref{eqmscf}) and the Cauchy-Schwarz inequality,
 we get
\begin{eqnarray}
\nonumber \bigl|\eta_j(z)\bigr|\le \bigl|S^{(0j)}(z)\bigr|+\sum_{n\ge 0}\bigl|S^{((n+1)j)}(z)-S^{(nj)}(z)\bigr|\le
\bigl|S^{(0j)}(z)\bigr|\\
+\sum_{n\ge 0}\sum_{1\le k\le N^{n+1}} \bigl|h\bigl(z, y^{((n+1)j)}_k\bigr)-h\bigl(z,
y^{(nj)}_{k'}\bigr)\bigr|\bigl|\mu\bigl(E^{((n+1)j)}_{k}\bigr)\bigr|\le
\nonumber \bigl|S^{(0j)}(z)\bigr|\\
\nonumber +K_h\sum_{n\ge 0}\sum_{1\le k\le N^{n+1}} {\alpha}^{-n\beta(h)}\bigl|\mu\bigl(E^{((n+1)j)}_{k}\bigr)\bigr|\le
\bigl|h\bigl(z, y^{(0j)}_1\bigr)\bigr| \bigl|\mu(E_j)\bigr| \\
\label{eqeset} +K_h\Bigl(\sum_{n\ge 0} N^{n+1} {\alpha}^{2n(\beta-\beta(h))}\Bigr)^{1/2}\Bigl(\sum_{n\ge 0}
{\alpha}^{-2n\beta}\sum_{1\le k\le N^{n+1}}\bigr|\mu\bigl(E^{((n+1)j)}_{k}\bigr)\bigr|^2\Bigr)^{1/2}\, .
\end{eqnarray}
(The number $k'$ is chosen such that $E^{((n+1)j)}_{k}\subset E^{(nj)}_{k'}$.) Lemma~\ref{lmfkmu} implies
\[
\sum_{n\ge 0} {\alpha}^{-2n\beta}\sum_{1\le k\le N^{n+1}}\bigl|\mu\bigl(E^{((n+1)j)}_{k}\bigr)\bigr|^2<\infty\quad
\mbox{\textrm ~a.~s.}
\]
Note that $2(\beta(h)-\beta)>d_f=\log N / \log\alpha$. As in~(\ref{eqeset}) we obtain that
\[
\sum_{n\ge m}\bigl|S^{((n+1)j)}(z)-S^{(nj)}(z)\bigr|\le
K_h\Bigl(\sum_{n\ge m} N^{n+1} {\alpha}^{2n(\beta-\beta(h))}\Bigr)^{1/2}\Bigl(\sum_{n\ge m}
{\alpha}^{-2n\beta}\sum_{1\le k\le N^{n+1}}\bigr|\mu\bigl(E^{((n+1)j)}_{k}\bigr)\bigr|^2\Bigr)^{1/2},
\]
consequently, the series 
\[
\sum_{n\ge 0}\bigl|S^{((n+1)j)}(z)-S^{(nj)}(z)\bigr|
\]
converges uniformly in $z$ a.s. Each $S^{(nj)}(z)$
has continuous paths, therefore $\eta_j(z)$ has a continuous version.

 Recall that
$\eta(z)=\sum_{j\ge 1}\eta_j (z)$. Using $\left|h\left(z,\ y^{(0j)}_1\right)\right|\le M$ for some  constant $M$, estimate~(\ref{eqeset}) and the Cauchy-Schwarz inequality we get for $\rho>1/2$,

\begin{eqnarray}
\nonumber \sum_{j\ge m}\left|\eta_j (z)\right|\le \left(\sum_{j\ge m}j^{-2\rho}\right)^{1/2}\left(\sum_{j\ge
m}j^{2\rho}M^2 \left|\mu\left(E_j\right)\right|^2\right)^{1/2}\\
\label{eqserj} + K_h\left(\sum_{j\ge m}j^{-2\rho}\right)^{1/2} \left(\sum_{j\ge m} \sum_{n\ge 0} \sum_{1\le k\le
N^{n+1}} j^{2\rho} {\alpha}^{-2n\beta}\left|\mu\left(E^{((n+1)j)}_{k}\right)\right|^2 \right)^{1/2}\, .
\end{eqnarray}
Now we apply Lemma~\ref{lmfkmu} to the function sequences
\begin{eqnarray*}
\left\{g_l(y),\ l\ge 1\right\}=\left\{j^{\rho}M {\bf 1}_{E_j}(y),\ j\ge 1\right\},\\
\left\{g_l(y),\ l\ge 1\right\}=\left\{j^{\rho}{\alpha}^{-n\beta} {\bf 1}_{E^{((n+1)j)}_{k}}(y),\ j\ge 1,\ n\ge 0,\ 1\le
k\le N^{n+1}\right\}.
\end{eqnarray*}
where $\sum_{l\ge 1} \left|g_l(y)\right|\le C|y|^{\rho d_f}$ (see~(\ref{eqesmy})). From the integrability assumption of
our Lemma we infer that the all series in~(\ref{eqserj}) converge for $m=1$. Thus we obtain that $\sum_{j\ge
m}\left|\eta_j (z)\right|$ tends to 0 uniformly a.s. Therefore, $\eta(z)$ has a continuous version.
\end{proof}

\section{Stochastic heat equation on a nested fractal}
\label{scsheq}

We consider the heat equation on~$\tilde{E}$ in the following mild sense
\begin{eqnarray}
\nonumber u(t, x)=\int_{\tilde{E}}{p}(t, x, y)u_0(y)\,d{\sf m}(y)
+\int_0^t ds \int_{\tilde{E}}{p}(t-s, x, y)f(s, y, u(s, y))\,d{\sf m}(y)\\
+\int_{\tilde{E}} d\mu(y)\int_0^t  {p}(t-s, x, y)\sigma(s, y)\,ds,\quad t\in [0,\ T],\quad x\in \tilde{E}\, ,
\label{eqhsif}
\end{eqnarray}
where $u:\ [0,\ T]\times \tilde{E}\times\Omega\to\mathbb{R}$ is an unknown measurable stochastic function, $p$ is the
transition density of the Brownian motion on $\tilde{E}$ constructed in~\cite{kuma93}, $\mu$ is a SM on
$\mathcal{B}(\tilde{E})$ and $u_0$ is a random initial function.

We make the following assumptions.

\textbf{Assumption 2.} $u_0:{\tilde{E}}\times\Omega\to{\mathbb R}$ is measurable and for each
$\omega\in \Omega$ the function  $u_0(\cdot, \omega)$ is continuous and bounded on  $\tilde{E}$, $\bigl|u_0(y,\
\omega)\bigr|\le C_{u_0}(\omega)$, and vanishes at $\infty$.

\textbf{Assumption 3.} $f:[0, T]\times{\tilde{E}}\times{\mathbb R}\to{\mathbb R}$ is measurable and bounded,
$|f(s, y, r)|\le C_f$.

\textbf{Assumption 4.} $f(s, y, r)$ is uniformly Lipschitz in $y\in {\tilde{E}},\ r \in{\mathbb R}$,
\[
\bigl|f(s, y_1, r_1)-f(s, y_2, r_2)\bigr|\le K_f \bigl(\bigl|y_1- y_2\bigr| + \bigl|r_1-r_2\bigr|\bigr)\, .
\]

\textbf{Assumption 5.} $\sigma:[0, T]\times{\tilde{E}}\to{\mathbb R}$ is measurable and bounded, $\bigl|\sigma(s,
y)\bigr|\le C_{\sigma}$.

\textbf{Assumption 6.} $\sigma(s, y)$ is uniformly H\"{o}lder continuous in $y\in \tilde{E}$,
\[
\bigl|\sigma(s, y_1)-\sigma(s, y_2)\bigr|\le K_{\sigma}\bigl|y_1- y_2\bigr|^{\beta(\sigma)},\quad \beta(\sigma)>d_f/2.
\]

\textbf{Assumption 7.} The spectral dimension $d_s$ of $E$ is less than $4/3$.

\textbf{Assumption 8.} $\mu\bigl(\{x\}\bigr)=0$ a.~s. for each $x\in \tilde{E}$.

\begin{lem}\label{lmcnth}
Under the Assumptions 1, 5, 6 and 7 the function
\[
h(z, y)=\int_0^t  {p}(t-s, x, y)\sigma(s, y)\,ds,\quad z=(t, x)\in [0,\ T]\times \tilde{E},\ y\in \tilde{E},
\]
satisfies the conditions of Theorem~\ref{thhcms} for  ${\sf Z}=[0,\ T]\times \tilde{E}$.
\end{lem}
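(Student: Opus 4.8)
The plan is to verify the three requirements that Theorem~\ref{thhcms} imposes on the kernel: boundedness, joint continuity on $Z\times\tilde{E}$, and the Hölder bound~(\ref{eqmscf}) with exponent $\beta(h)>d_f/2$. (The conditions of Theorem~\ref{thhcms} that concern $\mu$ rather than $h$ --- that $\mu(\{x\})=0$ a.s., which is Assumption~8, and that $|y|^{\tau}$ be $\mu$-integrable for some $\tau>d_f/2$ --- are imposed separately and are not at issue here.) Throughout I abbreviate $r=t-s$ and use Theorem~\ref{thbarp}: part~(b) gives $p(t-s,x,y)\le c_2(t-s)^{-d_s/2}$, and since Assumption~7 forces $d_s<4/3<2$, the function $s\mapsto(t-s)^{-d_s/2}$ is integrable on $[0,t]$ with $\int_0^t(t-s)^{-d_s/2}\,ds\le T^{1-d_s/2}/(1-d_s/2)$. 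Together with Assumption~5 this shows the integral defining $h$ converges as an ordinary Lebesgue integral and yields boundedness: $|h(z,y)|\le C_\sigma c_2 T^{1-d_s/2}/(1-d_s/2)=:M$.

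The Hölder bound is the heart of the lemma. I would split $p(r,x,y_1)\sigma(s,y_1)-p(r,x,y_2)\sigma(s,y_2)=\bigl(p(r,x,y_1)-p(r,x,y_2)\bigr)\sigma(s,y_1)+p(r,x,y_2)\bigl(\sigma(s,y_1)-\sigma(s,y_2)\bigr)$ and integrate each term over $s\in[0,t]$. The second term is controlled by Assumption~6 and the integrable bound above, contributing $\le\bigl(c_2 T^{1-d_s/2}/(1-d_s/2)\bigr)K_\sigma|y_1-y_2|^{\beta(\sigma)}$. For the first term I would combine parts~(a) and~(b) of Theorem~\ref{thbarp} into $|p(r,x,y_1)-p(r,x,y_2)|\le\min\{c_1 r^{-1}|y_1-y_2|^{d_w-d_f},\,2c_2 r^{-d_s/2}\}$, then for a fixed $\theta\in(0,1)$ bound the minimum by the weighted geometric mean $\le(c_1|y_1-y_2|^{d_w-d_f})^{\theta}(2c_2)^{1-\theta}r^{-\theta-(1-\theta)d_s/2}$ (equivalently: split the $r$-integral at a suitable power of $|y_1-y_2|$). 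Since $\theta+(1-\theta)d_s/2<1$ for every $\theta<1$, integrating in $r$ over $(0,t]$ is legitimate and gives a contribution $\le C_\theta|y_1-y_2|^{\theta(d_w-d_f)}$. Hence~(\ref{eqmscf}) holds with $\beta(h)=\min\{\theta(d_w-d_f),\beta(\sigma)\}$ and $K_h$ independent of $z$. The crucial point, and the main obstacle, is to arrange $\beta(h)>d_f/2$: using $d_w=2d_f/d_s$, the inequality $d_w-d_f>d_f/2$ is equivalent to $d_s<4/3$, which is exactly Assumption~7, so some $\theta<1$ satisfies $\theta(d_w-d_f)>d_f/2$, while $\beta(\sigma)>d_f/2$ by Assumption~6.

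For joint continuity on $[0,T]\times\tilde{E}\times\tilde{E}$ I would first note that the same estimate applied with $x$ in place of $y$ --- using the symmetry $p(r,x_1,y)=p(r,y,x_1)$ so that Theorem~\ref{thbarp}(a) again applies --- gives $|h(t,x_1,y)-h(t,x_2,y)|\le C|x_1-x_2|^{\theta(d_w-d_f)}$ uniformly in $(t,y)$; together with the Hölder bound in $y$ (uniform in $(t,x)$) this makes $h$ equicontinuous in $(x,y)$ uniformly over $t\in[0,T]$. It then remains to prove continuity in $t$ for each fixed $(x,y)$: writing $h(t,x,y)=\int_0^t p(r,x,y)\sigma(t-r,y)\,dr$ and taking $t>t_0$ (the case $t<t_0$ being analogous), the difference $h(t,x,y)-h(t_0,x,y)$ splits into $\int_{t_0}^t p(r,x,y)\sigma(t-r,y)\,dr$, which tends to $0$ because $r\mapsto p(r,x,y)$ is integrable near $0$ and the interval shrinks, plus $\int_0^{t_0}p(r,x,y)\bigl(\sigma(t-r,y)-\sigma(t_0-r,y)\bigr)\,dr$, which tends to $0$ by cutting at a small $\varepsilon>0$: on $[0,\varepsilon]$ the sub-Gaussian bound gives a contribution $\le 2C_\sigma c_2\varepsilon^{1-d_s/2}/(1-d_s/2)$, while on $[\varepsilon,t_0]$ one bounds $p(r,x,y)\le c_2\varepsilon^{-d_s/2}$ and invokes continuity of translation in $L^1$ for the bounded measurable function $\sigma(\cdot,y)$ (the case $t_0=0$ being immediate from $|h(t,x,y)|\le C_\sigma c_2 t^{1-d_s/2}/(1-d_s/2)$). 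Equicontinuity in $(x,y)$ uniform in $t$, combined with continuity in $t$ for each $(x,y)$, yields joint continuity of $h$, which completes the verification.
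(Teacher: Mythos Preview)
Your proof is correct and follows essentially the same route as the paper: the same decomposition of $h(z,y_1)-h(z,y_2)$, the same combination of parts (a) and (b) of Theorem~\ref{thbarp} to control $\int_0^t|p(t-s,x,y_1)-p(t-s,x,y_2)|\,ds$, and the same reduction of $\beta(h)>d_f/2$ to $d_w-d_f>d_f/2$, i.e.\ $d_s<4/3$. The only differences are cosmetic---the paper splits the time integral at a threshold $\delta$ and optimizes (picking up a harmless $\log\delta$), whereas your geometric-mean interpolation with parameter $\theta$ reaches the same exponent without the logarithm, and the paper dismisses boundedness and joint continuity as obvious while you spell them out.
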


\begin{proof} We  check the H\"{o}lder continuity of $h$, the other conditions obviously hold true. By $C$ we will denote a positive
constant whose values may change. Using Assumption 6 and Theorem~\ref{thbarp}~b) we get
\begin{eqnarray*}
\bigl|h(z, y_1)-h(z, y_2)\bigr|\le
\int_0^t  \bigl|{p}(t-s, x, y_1)-{p}(t-s, x, y_2)\bigr| \bigl|\sigma(s, y_1)\bigr|\,ds \\
+\int_0^t  {p}(t-s, x, y_2)\bigl|\sigma(s, y_1)-\sigma(s, y_2)\bigr|\,ds\\
\le C_\sigma \int_0^t \bigl|{p}(t-s, x, y_1)-{p}(t-s, x, y_2)\bigr|\,ds+C K_{\sigma}\bigl|y_1-
y_2\bigr|^{\beta(\sigma)}\, .
\end{eqnarray*}

By Theorem~\ref{thbarp}~a), for the first term we have
\begin{eqnarray*}
\int_0^t \bigl|{p}(t-s, x, y_1)-{p}(t-s, x, y_2)\bigr|\,ds= \int_0^{t-\delta}+\int_{t-\delta}^t\\
\le C\int_0^{t-\delta} (t-s)^{-1} \bigl|y_1- y_2\bigr|^{d_w-d_f}\,ds+ C\int_{t-\delta}^t (t-s)^{-d_s/2}\,ds\\
\le  C \bigl(\log T - \log \delta \bigr) \bigl|y_1- y_2\bigr|^{d_w-d_f}+C\delta^{1-d_s/2}.
\end{eqnarray*}
Taking $\delta=\bigl|y_1- y_2\bigr|^{(d_w-d_f)/(1-d_s/2)}$ and using that for all $\gamma>0$, $t^\gamma\log t\to 0$ as
$t\to 0$, we obtain (\ref{eqmscf}) for any $\beta(h)<\min \{d_w-d_f,\ \beta(\sigma)\}$. If $d_s={2d_f} /
{d_w}<{4}/{3}$ we can find $\beta(h)>{d_f}/{2}$.
\end{proof}

\begin{thm}
Suppose Assumptions 1--8 hold.

1) Equation~(\ref{eqhsif}) has a solution $u(t, x)$. If $v(t, x)$ is another solution, then for each
$t$ and $x$, $u(t, x)=v(t, x)$~a.~s.

2) If, additionally, the function $|y|^{\tau}$ be integrable w.r.t. $\mu$ on~$\tilde{E}$ for some $\tau>d_f/2$, then the solution $u(t,
x)$ has a version continuous on $[0,\ T]\times \tilde{E}$.
\end{thm}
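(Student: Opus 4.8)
\medskip\noindent\textbf{Proof proposal.}\ The plan is to treat the stochastic convolution
\[
J(t,x):=\int_{\tilde{E}}d\mu(y)\int_0^t p(t-s,x,y)\sigma(s,y)\,ds
\]
as a fixed random forcing term and to solve the resulting deterministic integral equation pathwise by Picard iteration, using that $\int_{\tilde E}p(t-s,x,y)\,d{\sf m}(y)\le 1$, that $f$ is bounded (Assumption~3), and that $f$ is Lipschitz in $(y,r)$ (Assumption~4). First I would observe that by Lemma~\ref{lmcnth} the inner integral $h((t,x),y)=\int_0^t p(t-s,x,y)\sigma(s,y)\,ds$ is bounded on $[0,T]\times\tilde E\times\tilde E$ (finiteness of $\int_0^t(t-s)^{-d_s/2}\,ds$ uses $d_s<4/3<2$), hence $\mu$-integrable in $y$, so $J(t,x)$ is a well-defined random variable for each $(t,x)$; moreover, as in the proof of Theorem~\ref{thhcms}, the Riemann-type sums approximating $J(t,x)$ are continuous in $(t,x)$ and converge to $J(t,x)$ in probability, so $(t,x)\mapsto J(t,x)$ is stochastically continuous and admits a jointly measurable version, which I fix.

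For existence in part~1) I would set, for each $\omega$, $u^{(0)}(t,x)=\int_{\tilde E}p(t,x,y)u_0(y)\,d{\sf m}(y)+J(t,x)$ and
\[
u^{(n+1)}(t,x)=\int_{\tilde E}p(t,x,y)u_0(y)\,d{\sf m}(y)+\int_0^t ds\int_{\tilde E}p(t-s,x,y)f(s,y,u^{(n)}(s,y))\,d{\sf m}(y)+J(t,x).
\]
Each $u^{(n)}$ is jointly measurable in $(t,x,\omega)$, being the $ds\,d{\sf m}$-integral of a bounded jointly measurable integrand ($f$ is measurable and, by Assumption~4, continuous in the last variable). Since $|f|\le C_f$ and $\int p\,d{\sf m}\le1$ one has $|u^{(1)}-u^{(0)}|\le C_ft$ and, inductively, $\sup_{x\in\tilde E}|u^{(n+1)}(t,x)-u^{(n)}(t,x)|\le a_{n+1}(t)$ with $a_1(t)=C_ft$, $a_{n+1}(t)=K_f\int_0^t a_n(s)\,ds$, so $a_n(t)=C_fK_f^{n-1}t^n/n!$ and $\sum_n\sup_{[0,T]\times\tilde E}|u^{(n+1)}-u^{(n)}|<\infty$. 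Hence $u^{(n)}$ converges uniformly on $[0,T]\times\tilde E$, pathwise, to a jointly measurable $u$; letting $n\to\infty$ (continuity of $f$ in $r$, dominated convergence) shows $u$ solves~(\ref{eqhsif}). For uniqueness, if $u$ and $v$ both solve~(\ref{eqhsif}), then in $w:=u-v$ the initial and stochastic terms cancel, while since $|f|\le C_f$ each of $u,v$ equals the initial term plus $J$ plus a term bounded by $C_fT$; thus $b_0:=\sup_{t,x}|w(t,x)|\le 2C_fT<\infty$ for a.e.\ $\omega$. Iterating $|w(t,x)|\le K_f\int_0^t ds\int_{\tilde E}p(t-s,x,y)|w(s,y)|\,d{\sf m}(y)$ yields $|w(t,x)|\le b_0(K_ft)^n/n!$ for all $n$, hence $w\equiv0$ pathwise a.s.; in particular $u(t,x)=v(t,x)$ a.s.\ for each $(t,x)$.

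For part~2), the extra assumption that $|y|^\tau$ is $\mu$-integrable for some $\tau>d_f/2$ makes $h$ satisfy all hypotheses of Theorem~\ref{thhcms} (via Lemma~\ref{lmcnth}), so $J$ has a version $\tilde J$ with continuous paths on $[0,T]\times\tilde E$. The initial term $I_0(t,x):=\int_{\tilde E}p(t,x,y)u_0(y)\,d{\sf m}(y)$ is continuous on $[0,T]\times\tilde E$: for $t>0$, continuity in $(t,x)$ follows by dominated convergence, using joint continuity of $p$ and the sub-Gaussian bound of Theorem~\ref{thbarp}~b) as an integrable majorant; continuity as $(t,x)\to(0,x_0)$ holds because the heat semigroup of~\cite{kuma93} is Feller, so that $\int_{\tilde E}p(t,x,y)u_0(y)\,d{\sf m}(y)\to u_0(x)$ uniformly in $x\in\tilde E$ as $t\to0^+$ for $u_0$ as in Assumption~2. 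Likewise $F(u)(t,x):=\int_0^t ds\int_{\tilde E}p(t-s,x,y)f(s,y,u(s,y))\,d{\sf m}(y)$, evaluated at the solution $u$ of part~1), is continuous on $[0,T]\times\tilde E$: the inner integral is continuous in $(t,x)$ by the same dominated-convergence argument applied to the bounded jointly measurable function $(s,y)\mapsto f(s,y,u(s,y))$, and the contribution of the moving upper limit is controlled by the uniform bound $C_f$ on the integrand. Since $u=I_0+F(u)+J$, the function $\tilde u:=I_0+F(u)+\tilde J$ satisfies $\tilde u(t,x)-u(t,x)=\tilde J(t,x)-J(t,x)=0$ a.s.\ for each $(t,x)$, so $\tilde u$ is a version of $u$, and $\tilde u$ is continuous on $[0,T]\times\tilde E$.

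The step I expect to be the main---though by now essentially routine---obstacle is carrying out the Picard and Gronwall arguments \emph{pathwise}, since $\mu$ is assumed to have neither moments nor path regularity: what makes this work is that boundedness of $f$ forces the nonlinear term to be bounded, whence the difference of two solutions is bounded, which is exactly what validates the Gronwall-type iteration without any integrability of $J$. The genuinely delicate ingredient---regularity of the stochastic convolution driven by a general stochastic measure---has already been supplied by Theorem~\ref{thhcms} and Lemma~\ref{lmcnth}.
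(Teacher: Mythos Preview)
Your proposal is correct and follows essentially the same route as the paper: pathwise Picard iteration exploiting $|f|\le C_f$ and the Lipschitz bound to get the factorial estimate, then the same Gronwall iteration for uniqueness, and Lemma~\ref{lmcnth} together with Theorem~\ref{thhcms} for the stochastic term. The only organizational differences are cosmetic: you start the iteration at $u^{(0)}=I_0+J$ rather than at $0$ (a harmless shift of index), and for part~2) you argue continuity directly on the fixed-point identity $u=I_0+F(u)+J$ by replacing $J$ with a continuous version, whereas the paper shows each iterate $u^{(n)}$ has a continuous version and passes to the uniform limit---both arguments rest on the same ingredients.
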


\begin{proof} Take $u^{(0)}(t, x)=0$ and set
\begin{eqnarray}
\nonumber
u^{(n+1)}(t, x)=\int_{\tilde{E}}{p}(t, x, y)u_0(y)\,d{\sf m}(y)
+\int_0^t ds \int_{\tilde{E}}{p}(t-s, x, y) f(s, y, u^{(n)}(s, y))\,d{\sf m}(y)\\
+\int_{\tilde{E}} d\mu(y) \int_0^t {p}(t-s, x, y)\sigma(s, y)\,ds,\quad n\ge 0\, . \label{mliter}
\end{eqnarray}
The measurability of the first and second summands of~(\ref{mliter}) follows from the Fubini theorem. The third summand
may be rewritten as limit in probability of the integrals of the simple functions
\[
\int_{E}h_i(t, x, y)\,d\mu(y)\, ,\quad i\to\infty,\quad h_i(t, x, y)=\sum_{k=1}^{l^{(i)}} h_k^{(i)}(t, x){\bf
1}_{A_k^{(i)}}(y).
\]
By~\cite[Lemma~2]{radt96}, the limit in probability of measurable processes is again measurable.

Further, for all $n,\ t,\ x$ we take the same version of the stochastic integral and obtain the following estimates for
all $\omega\in\Omega$. Assumption~5 implies
\begin{eqnarray}
\nonumber
\bigl|u^{(n+1)}(t, x)-u^{(n)}(t, x)\bigr|\\
\nonumber
=\Bigl|\int_0^t ds \int_{\tilde{E}}{p}(t-s, x, y)\bigl[f(s, y, u^{(n)}(s, y))-f(s, y, u^{(n-1)}(s, y))\bigr]\,d{\sf m}(y)\Bigr|\\
\le K_f\int_0^t ds \int_{\tilde{E}}{p}(t-s, x, y)\bigl|u^{(n)}(s, y)- u^{(n-1)}(s, y)\bigr|\,d{\sf m}(y),\quad n\ge 2.
\label{equdif}
\end{eqnarray}
Since $f$ is bounded,we infer
\[
\bigl|u^{(2)}(t, x)-u^{(1)}(t, x)\bigr| \le 2C_f\int_0^t ds \int_{\tilde{E}}{p}(t-s, x, y)\,d{\sf m}(y)=2C_f t\, .
\]

Set $g_n(t)=\sup_{x\in {\tilde{E}}}\bigl|u^{(n+1)}(t, x)-u^{(n)}(t, x)\bigr|,\quad n\ge 1\, .$ Then we get
from~(\ref{equdif}),
\begin{equation}
\label{eqestg} g_n(t)\le K_f\int_0^t g_{n-1}(s)\, ds\, ,\quad{ hence}\quad g_n(t)\le 2C_f K_f^n\frac{t^{n+1}}{(n+1)!} \,
,
\end{equation}
i.e., the series $\sum_{n=0}^{\infty} g_n(t)$ converges uniformly  on $[0, T]$. We define $u(t, x):=\lim_{n\to\infty}
u^{(n)}(t, x)$. Taking in~(\ref{mliter}) the limit as $n\to\infty$, we arrive at~(\ref{eqhsif}).

If $u(t, x)$ and $v(t, x)$ both are solutions to~(\ref{eqhsif}), then we may repeat the arguments from~(\ref{equdif})
to~(\ref{eqestg})  for $u-v$ (instead of $u^{(n+1)}-u^{(n)}$) and $g(t)=\sup_{x\in {\tilde{E}}}|u(t, x)-v(t, x)|$
(instead of $g_n$) and obtain $g=0$.

From Lemma~\ref{lmcnth} and Theorem~\ref{thhcms} it follows that $u^{(n)}$ has a continuous version. Then $u(t, x)$ as
uniform limit of $u^{(n)}(t, x)$ is continuous, too. \end{proof}

\textbf{Remark.} For the Vicsek set $E_{VS}$ we have $d_s={\log 25}/{\log 15}<{4}/{3}$, therefore $E_{VS}$ satisfies
Assumption~7. It is known that Sierpi\'{n}ski gasket is a nested fractal with $d_s={\log 9}/{\log 5}>{4}/{3}$, and
Assumption~7 fails for this set.

\section*{Acknowledgments}

V.Radchenko acknowledges the support of Alexander von Humboldt Foundation, grant 1074615, and thanks
Friedrich-Schiller-Universit\"{a}t Jena for hospitality.

\bibliographystyle{plain}
\bibliography{RadchenkoZaehleFrStMBib}

\end{document}